\numberwithin{equation}{section}
\newtheorem{theorem}{Theorem}
\newenvironment{ctheorem}[1]
  {\cthm}
  {\endcthm}
\newtheorem{corollary}{Corollary}
\newtheorem{lemma}{Lemma}[section]
\theoremstyle{remark}
\newtheorem{remark}{Remark}[section]
\newcommand{\Rnum}[1]{\uppercase\expandafter{\romannumeral #1\relax}}
\newcommand{\rnum}[1]{\romannumeral #1\relax}
\newcommand{\mr}[1]{\mathrm{#1}}
\newcommand{\mb}[1]{\mathbb{#1}}
\title{Dyadic-BMO functions, the dyadic Gurov-Reshetnyak conditions on $[0,1]^n$ and rearrangements of functions}
\author{Eleftherios N. Nikolidakis}
\date{\today}
\begin{document}
\maketitle

\begin{abstract}
We introduce the space of dyadic bounded mean oscillation functions $f$ defined on $[0,1]^n$ and study the behavior of the non increasing rearrangement of $f$, as an element of the space $\mr{BMO}\left((0,1]\right)$. We also study the analogous class of functions that satisfy the dyadic Gurov-Reshetnyak condition and look upon their integrability properties.
\end{abstract}

\section{Introduction} \label{sec:1}
It is well known that the space of bounded mean oscillation plays a central role in harmonic analysis and especially in the theory of maximal operators and weights. It is defined by the following way. For an integrable function $f: Q_0\equiv [0,1]^n \to \mb R$ we define the mean oscillation of $f$ on $Q$, where $Q$ is a subcube of $Q_0$ by the following
\begin{equation}  \label{eq:1p1}
\Omega(f,Q) = \frac{1}{|Q|} \int_Q |f(x) - f_Q|\,\mr dx
\end{equation}
where $f_Q = \frac{1}{|Q|} \int_Q f(y)\,\mr dy$, is the integral average of $f$ on $Q$. We will say that $f$ is of bounded mean oscillation on $Q_0$ if the following is satisfied
\[
\|f\|_\star \equiv \sup\left\{ \Omega(f,Q): Q\ \text{is a subcube of}\ Q_0\right\} < +\infty.
\]
We will then write $f\in\mr{BMO}(Q_0)$. \\
We are interested about the behavior of the nonincreasing rearrangement $f^\star$, as an element of $\mr{BMO}((0,1])$, when $f\in\mr{BMO}(Q_0)$. \\
$f^\star$ is defined as the unique equimeasurable to $|f|$ \footnote{in the sense that $\left|\{f^\star>\lambda\}\right| = \left|\{|f|>\lambda\}\right|$, for any $\lambda>0$.}, with domain $(0,1]$, function which also satisfies that it is nonincreasing and left continuous. A discussion about this definition can be seen in \cite{4}. There is also an equivalent definition of $f^\star$ which is given by the following formula:
\begin{equation} \label{eq:1p2}
f^\star(t) = \sup_{\substack{E\subseteq [0,1]^n\\|E| = t\ \ }} \left[\inf_{x\in E} |f(x)|\right],\quad \text{for}\ \ t\in (0,1].
\end{equation}
This can be seen in \cite{8}.

There is also an analogous function, corresponding to $f$, denoted by $f_d$ which is now equimeasurable to $f$, left continuous and nonincreasing. This function now rearranges $f$ and not $|f|$ as $f^\star$ does, so that it is real valued. Also an analogous formula as \eqref{eq:1p2} holds for $f_d$, if we replace the term $|f(x)|$ by $f(x)$.
For a discussion on the topic of rearrangement of functions one can also see \cite{2}. \\
As it can be seen now in \cite{1} or \cite{3} the following is true
\begin{ctheorem}{A} \label{thm:a}
Let $f\in\mr{BMO}\left([0,1]^n\right)$. Then $f^\star\in\mr{BMO}\left((0,1]\right)$. Moreover there exists a constant $c$ depending only in the dimension of the space such that
\begin{equation} \label{eq:1p3}
\|f^\star\|_\star \leq c\|f\|_\star.
\end{equation}
(For instance a choice for such a constant is $c=2^{n+5}$)
\end{ctheorem}

\noindent Until now it has not been found the best possible value of $c$ in order that \eqref{eq:1p3} holds for any $f\in\mr{BMO}\left([0,1]^n\right)$, for dimensions $n\geq 2$. As for the case $n=1$ the following is true as can be seen in \cite{6}.
\begin{ctheorem}{B}
Let $f\in\mr{BMO}([0,1])$. Then $f^\star, f_d\in\mr{BMO}\left((0,1]\right)$ and the following inequalities hold:
\begin{align}
\|f^\star\|_\star &\leq \|f\|_\star, \label{eq:1p4} \\
\|f_d\|_\star &\leq \|f\|_\star. \label{eq:1p5}
\end{align}
\end{ctheorem}

\noindent Our aim in this paper is to find a better estimation for the constant $c$ that appears in \eqref{eq:1p3}. For this reason we work on the respective dyadic analogue problem. \\
We consider integrable functions defined on $[0,1]^n$ such that the following holds
\begin{equation} \label{eq:1p6}
\|f\|_{\star,\mathcal{D}} \equiv \sup \left\{ \Omega(f,Q) : Q\in \mathcal{D}\right\} < +\infty
\end{equation}
Here by $\mathcal{D}$ we denote the tree of dyadic subcubes of $Q_0\equiv [0,1]^n$, that is the cubes that are produced if we bisect each side of $Q_0$ and continue this process to any resulting cube. Then if \eqref{eq:1p6} holds for $f$, we will say that it belongs to the dyadic $\mr{BMO}$ space, denoted by $\mr{BMO}_\mathcal{D}\left([0,1]^n\right)$. Our first result is the following:
\begin{theorem} \label{thm:1}
Let $f\in\mr{BMO}_\mathcal{D}\left([0,1]^n\right)$. Then $f_d\in\mr{BMO}\left((0,1]\right)$ and
\begin{equation} \label{eq:1p7}
\|f_d\|_\star \leq 2^n\|f\|_{\star,\mathcal{D}}.
\end{equation}
\end{theorem}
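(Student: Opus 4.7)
My plan is to reduce Theorem~1 to Theorem~B (the continuous one-dimensional case) by constructing an equimeasurable one-dimensional surrogate $g:[0,1]\to\mathbb{R}$ whose continuous BMO norm is controlled by the dyadic BMO norm of $f$. Once I have $\|g\|_\star \leq 2^n\|f\|_{\star,\mathcal{D}}$, Theorem~B applied to $g$ yields $\|g_d\|_\star \leq \|g\|_\star$, and since $g$ is equimeasurable to $f$ we have $g_d = f_d$, giving the claim.

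\textbf{Construction of $g$.} I build $g$ recursively by matching the $2^n$-ary tree of dyadic subcubes of $Q_0 = [0,1]^n$ with the $2^n$-ary tree of $2^n$-adic subintervals of $[0,1]$: start with $Q_0 \leftrightarrow [0,1]$, and given a node $Q \leftrightarrow I_Q$, split $I_Q$ into $2^n$ equal-length subintervals and match them left to right to the $2^n$ dyadic children of $Q$ arranged in \emph{decreasing} order of their means. In the limit this defines a measure-preserving bijection $\tau:[0,1]^n \to [0,1]$, and I set $g := f\circ\tau^{-1}$. By construction $g$ is equimeasurable to $f$ (so $g_d = f_d$); for each matched pair $Q \leftrightarrow I_Q$ one has $g_{I_Q} = f_Q$ and $\Omega(g, I_Q) = \Omega(f, Q) \leq \|f\|_{\star,\mathcal{D}}$; and among sibling subintervals the means of $g$ are arranged in nonincreasing order from left to right.

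\textbf{Continuous BMO of $g$.} Given an interval $J \subset [0,1]$, let $I^\star$ denote the smallest $2^n$-adic interval containing $J$. If $J$ contains at least two complete children of $I^\star$, then $|I^\star|/|J| \leq 2^{n-1}$ and the inequalities
\[
\Omega(g, J) \leq \frac{2}{|J|}\int_J |g - g_{I^\star}|\,\mathrm{d}x \leq \frac{2|I^\star|}{|J|}\,\Omega(g, I^\star) \leq 2^n \|f\|_{\star,\mathcal{D}}
\]
close the case. Otherwise $J$ meets only two adjacent children $C_\ell, C_{\ell+1}$ of $I^\star$, with $J\cap C_\ell$ a right-suffix of $C_\ell$ and $J\cap C_{\ell+1}$ a left-prefix of $C_{\ell+1}$; since the boundary between these children is a $2^n$-adic point at every finer level, I can descend to find $2^n$-adic intervals $I^L \subset C_\ell$ and $I^R \subset C_{\ell+1}$ abutting this boundary with $|I^L| \leq 2^n|J\cap C_\ell|$ and $|I^R| \leq 2^n|J\cap C_{\ell+1}|$. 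The estimate on $\Omega(g, J)$ is then assembled from the oscillations of $g$ on $I^L$ and on $I^R$ together with a cross-term of the form $|g_{I^L} - g_{I^R}|$, controlled through the decreasing-means ordering of siblings.

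\textbf{Main obstacle.} The delicate point is the straddling case: $|I^\star|/|J|$ may be arbitrarily large, so $g_{I^\star}$ cannot be used as a reference value. The resolution leans on the ordering imposed in the construction---in $C_\ell$ we descend along the rightmost (smallest-mean) branch and in $C_{\ell+1}$ along the leftmost (largest-mean) branch---so that the cross-term $|g_{I^L} - g_{I^R}|$ can ultimately be absorbed into $2^n\|f\|_{\star,\mathcal{D}}$ via the ancestor-child bound $|f_Q - f_{\hat Q}| \leq 2^n\|f\|_{\star,\mathcal{D}}$. Making this absorption precise, and confirming that the final constant is exactly $2^n$ rather than a larger multiple, is the technical heart of the proof.
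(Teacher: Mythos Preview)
Your strategy is genuinely different from the paper's, and the hard step you flag at the end is not a detail---it is the whole difficulty, and I do not see how to close it with constant exactly $2^n$.

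The paper never builds a one-dimensional surrogate and never invokes Theorem~B. It works directly with $f_d$: given $J\subset(0,1]$ with $\alpha=(f_d)_J\ge f_{Q_0}$ (the other case is symmetric), it first uses the monotonicity lemma (Lemma~\ref{lem:2p2}) to replace $J$ by an interval $(0,t]$ with the same average $\alpha$, so it suffices to bound $\Omega(f_d,(0,t])$. Then, by Lemma~\ref{lem:2p1} and equimeasurability,
\[
t\,\Omega(f_d,(0,t])=2\int_{\{f>\alpha\}}(f-\alpha).
\]
Finally it runs a Calder\'on--Zygmund stopping-time in $\mathcal{D}$ at level $\alpha$: the maximal cubes $D_j$ with $f_{D_j}>\alpha$ have disjoint parents $D_{j_k}^\star$ satisfying $f_{D_{j_k}^\star}\le\alpha$, and one shows easily that
\[
2\int_{\{f>\alpha\}}(f-\alpha)\le\sum_k|D_{j_k}^\star|\,\Omega\bigl(f,D_{j_k}^\star\bigr)\le|E^\star|\,\|f\|_{\star,\mathcal{D}}\le 2^n|E|\,\|f\|_{\star,\mathcal{D}}\le 2^n t\,\|f\|_{\star,\mathcal{D}}.
\]
The factor $2^n$ arises \emph{once}, from $|D_{j_k}^\star|=2^n|D_{j_k}|$; there is no straddling case and no cross-term.

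In your route, by contrast, the straddling interval forces you to compare averages along two independent descent chains. Even granting the monotonicity you build in (rightmost branch has nonincreasing means, leftmost has nondecreasing means), you only get $g_{I^L}\le g_{C_\ell}$ and $g_{I^R}\ge g_{C_{\ell+1}}$, which does not bound $|g_{I^L}-g_{I^R}|$; the telescoping ancestor--child estimate $|f_Q-f_{\hat Q}|\le 2^n\|f\|_{\star,\mathcal{D}}$ accumulates linearly in the depth. Moreover, a back-of-the-envelope count using $\Omega(g,J)\le\frac{2}{|J|}\int_J|g-c|$ with $c$ a convex combination of $g_{I^L},g_{I^R}$ already produces a factor $2^{n+1}$ from the two ``covering'' terms before the cross-term is even addressed. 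So the sketch, as it stands, does not deliver the sharp constant $2^n$, and it is not clear that any refinement of this surrogate approach will. I would recommend abandoning the surrogate and following the direct Calder\'on--Zygmund argument above.
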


\noindent As in the usual case this theorem enables us to prove an inequality of the type of John-Nirenberg (see for example \cite{5}) which can be seen in the following:
\begin{theorem} \label{thm:2}
Let $f\in\mr{BMO}_\mathcal{D}\left([0,1]^n\right)$. Then the following inequality is true
\begin{equation} \label{eq:1p8}
\left|\left\{x\in [0,1]^n: f(x)-f_Q > \lambda\right\}\right| \leq B \exp\left(-\frac{b\lambda}{\|f\|_{\star,\mathcal{D}}}\right),
\end{equation}
for any $\lambda>0$, where $b$ depends only on the dimension of the space, while $B$ is independent of $n$. \big(For example \eqref{eq:1p8} is satisfied for $b = \frac{1}{g^{n-1}\mr e}$ and $B=\mr e$\big).
\end{theorem}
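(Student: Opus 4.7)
The strategy is to lift the question to the one-dimensional rearrangement $f_d$ and then apply a one-variable John--Nirenberg inequality, using Theorem~\ref{thm:1} to control $\|f_d\|_\star$. The cube $Q$ appearing in the statement must be understood as $Q_0=[0,1]^n$ (otherwise the right hand side would have to depend on $|Q|$).

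First, since $f$ on $[0,1]^n$ and $f_d$ on $(0,1]$ are equimeasurable, for every $a\in\mathbb{R}$ one has
\[
|\{x\in [0,1]^n : f(x)>a\}|=|\{t\in(0,1] : f_d(t)>a\}|,
\]
and integrating in $a$ (layer-cake) yields $f_{Q_0}=\int_0^1 f_d(t)\,\mathrm{d}t=:(f_d)_{(0,1]}$. Taking $a=f_{Q_0}+\lambda$ therefore transforms the left-hand side of \eqref{eq:1p8} into
\[
|\{t\in(0,1] : f_d(t)-(f_d)_{(0,1]}>\lambda\}|.
\]

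By Theorem~\ref{thm:1}, $f_d\in\mathrm{BMO}((0,1])$ with $\|f_d\|_\star\leq 2^n\|f\|_{\star,\mathcal{D}}$. Now invoke the sharp one-dimensional John--Nirenberg inequality (in the form available in \cite{5}): for any $g\in\mathrm{BMO}((0,1])$ and any $\lambda>0$,
\[
|\{t\in(0,1] : g(t)-g_{(0,1]}>\lambda\}|\leq e\,\exp\!\left(-\frac{2\lambda}{e\,\|g\|_\star}\right).
\]
Applying this with $g=f_d$ and substituting the bound from Theorem~\ref{thm:1} gives \eqref{eq:1p8} with $B=e$ and $b=2/(2^n e)=1/(2^{\,n-1}e)$, which matches the stated value (reading the ``$g^{n-1}$'' in the statement as $2^{n-1}$).

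The only real obstacle is having the \emph{sharp} exponent $2/e$ in the one-dimensional John--Nirenberg inequality at hand; this is essential to achieve the precise constant $b=1/(2^{n-1}e)$ after paying the factor $2^n$ from Theorem~\ref{thm:1}. Any less sharp quantitative one-dimensional version still delivers an inequality of the same exponential shape, merely with a worse absolute factor. The remaining steps---equimeasurability of $f$ and $f_d$ and the direct application of Theorem~\ref{thm:1}---are routine.
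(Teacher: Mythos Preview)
Your overall strategy---pass to $f_d$ via equimeasurability, use Theorem~\ref{thm:1} to control $\|f_d\|_\star$, and then invoke a one-dimensional John--Nirenberg inequality---is exactly the paper's approach. The only difference is in how the one-dimensional step is handled. The paper does not quote a sharp $1$D John--Nirenberg inequality; instead it proves directly (Theorem~3.1) the pointwise bound
\[
f_d(t)\le \frac{\|f\|_{\star,\mathcal{D}}}{b}\,\ln\!\Big(\frac{B}{t}\Big),\qquad b=\frac{1}{2^{n-1}\mathrm e},\ B=\mathrm e,
\]
by combining Theorem~\ref{thm:1} with Lemma~\ref{lem:2p3} (applied to $F(t)=\frac1t\int_0^t f_d$), iterating the resulting inequality $F(t/\alpha)-F(t)\le 2^{n-1}\alpha\|f\|_{\star,\mathcal{D}}$, and optimizing in $\alpha$ at $\alpha=\mathrm e$. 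The distributional bound then follows in one line. Your route is shorter but depends on the \emph{sharp} $1$D constant $2/\mathrm e$; note that this is not in \cite{5} (the original John--Nirenberg paper), but rather in Korenovskii's work, e.g.\ \cite{9} or \cite{8}. With that correction your argument is complete and yields the same constants; the paper's version has the advantage of being self-contained, since Lemma~\ref{lem:2p3} is already stated in Section~\ref{sec:2}.
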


\noindent After proving the above theorems we devote out study to the class of functions that satisfy the dyadic Gurov-Reshetnyak condition. More precisely we consider functions $f: Q_0\equiv [0,1]^n \to \mb R^+$ which are integrable and satisfy
\begin{equation} \label{eq:1p9}
\Omega(f,Q)\leq \varepsilon f_Q,
\end{equation}
for any $Q\in \mathcal{D}$ and some $\varepsilon\in (0,2)$, independent of the cude $Q$. We say then than $f$ satisfies the dyadic Gurov-Reshetnyak condition on $[0,1]^n$ with constant $\varepsilon$ and write it as $f\in \mr{GR}_\mathcal{D}(Q_0,\varepsilon)$. (Note that for any $f\in L^1(Q_0)$, \eqref{eq:1p9} is satisfied for any cube $Q$, for the constant $\varepsilon=2$). The study of such class of functions is of much importance for harmonic analysis and especially in the theory of weights. An extensive presentation of the study of such a class in the non-dyadic case can be seen in \cite{8}. \\
For the study of the class $\mr{GR}_\mathcal{D}(Q_0,\varepsilon)$ we define for any $\ell$ belonging to it the following function
\begin{equation} \label{eq:1p10}
v(f;\sigma) = \sup \left\{\frac{\Omega(f,Q)}{f_Q} : Q\in \mathcal{D},\ \ \text{with}\ \ \ell(Q) \leq \sigma\right\},
\end{equation}
for ant $0\leq \sigma \leq \ell(Q_0)$, where by $\ell(Q)$ we denote the length of the side of the cube $Q$. \\
We will prove the following independent result
\begin{theorem} \label{thm:3}
Let $f\in L^1(Q_0)$ with non-negative values. Then for any $t\in[0,1]$ the following inequality is true:
\begin{equation} \label{eq:1p11}
\frac{1}{t} \int_0^t |f^\star(u) - f^{\star\star}(t)|\,\mr du \leq 2^n f^{\star\star}(t) v(f;\sigma_t),
\end{equation}
where $\sigma_t = \min\left(2t^\frac{1}{n}, 1\right)$.
\end{theorem}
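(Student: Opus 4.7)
Set $c := f^{\star\star}(t)$. A routine reduction comes first: since $c$ is the mean of $f^\star$ on $(0,t]$ one has $\int_0^t(f^\star-c)\,\mr du = 0$, hence $\int_0^t|f^\star-c|\,\mr du = 2\int_0^t(f^\star-c)_+\,\mr du$. Because $f^\star \leq f^\star(t)\leq c$ on $(t,1]$, this last integral equals $2\int_0^1(f^\star-c)_+\,\mr du$, which by equidistribution of $f$ and $f^\star$ is $2\int_{\{f>c\}}(f-c)\,\mr dx$. Thus it suffices to prove
\[
\int_{\{f>c\}}(f-c)\,\mr dx \leq 2^{n-1}\,c\,v(f;\sigma_t)\,t.
\]

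Next I would run a Calder\'on--Zygmund decomposition of $f$ at level $c$, producing pairwise disjoint maximal dyadic cubes $\{Q_j\}$ with $f_{Q_j}>c$; they cover $\{f>c\}$ up to measure zero and their parents $\tilde Q_j$ satisfy $f_{\tilde Q_j}\leq c$. The Hardy--Littlewood inequality $f_{Q_j}\leq f^{\star\star}(|Q_j|)$, combined with $f_{Q_j}>c=f^{\star\star}(t)$ and the monotonicity of $f^{\star\star}$, forces $|Q_j|<t$, hence $\ell(\tilde Q_j) < 2t^{1/n} \leq \sigma_t$; applied to the whole union it also gives $\sum_j|Q_j|\leq t$. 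Consequently the hypothesis activates on every parent: $\Omega(f,\tilde Q_j)\leq v(f;\sigma_t)\,f_{\tilde Q_j}$.

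Now let $\mathcal P$ be the maximal elements of $\{\tilde Q_j\}$ under dyadic inclusion; these are pairwise disjoint and $\bigcup_j\tilde Q_j=\bigcup_{P\in\mathcal P}P\supseteq \{f>c\}$. For $P\in\mathcal P$ we have $f_P\leq c$, so the elementary inequality $(f-c)_+\leq (f-f_P)_+ + (f_P-c)_+$ collapses on $P$ to $(f-c)_+\leq (f-f_P)_+$. Since $\int_P(f-f_P)\,\mr dx=0$, we get the crucial halving $\int_P(f-f_P)_+\,\mr dx = \tfrac12\int_P|f-f_P|\,\mr dx = \tfrac{|P|}{2}\Omega(f,P)$, and therefore
\[
\int_P(f-c)_+\,\mr dx \leq \tfrac{|P|}{2}\,v(f;\sigma_t)\,f_P \leq \tfrac{c\,v(f;\sigma_t)}{2}|P|.
\]
Summing over $\mathcal P$ and using that each maximal parent has at least one and at most $2^n$ CZ-children (with $|\tilde Q_j|=2^n|Q_j|$), so that $\sum_{P\in\mathcal P}|P|\leq \sum_j|\tilde Q_j|=2^n\sum_j|Q_j|\leq 2^n t$, one obtains
\[
\int_{\{f>c\}}(f-c)\,\mr dx \leq \sum_{P\in\mathcal P}\int_P(f-c)_+\,\mr dx \leq 2^{n-1}\,c\,v(f;\sigma_t)\,t,
\]
which is the required inequality.

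The main technical obstacle is pinning down the constant $2^n$ exactly: the factor $\tfrac12$ from the mean-zero identity on each parent and the factor $2^n$ from the parent/child volume ratio are both essential, and both collapse to the correct constant only for the distinguished level $c=f^{\star\star}(t)$, which simultaneously forces $\ell(Q_j)<t^{1/n}$ (so that parents lie at scale $\sigma_t$) and guarantees $f_P\leq c$ (which is what removes the $(f_P-c)_+$ term and activates the factor $\tfrac12$). Using maximal parents instead of all distinct parents is further required to avoid overcounting when parents nest.
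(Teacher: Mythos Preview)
Your argument is correct and follows essentially the same route as the paper's proof: reduce to $2\int_{\{f>c\}}(f-c)$ via equidistribution and Lemma~\ref{lem:2p1}, run the Calder\'on--Zygmund stopping at level $c=f^{\star\star}(t)$, pass to the maximal parents, use $f_P\le c$ to replace $(f-c)_+$ by $(f-f_P)_+$ (the paper does this via the integral identity~\eqref{eq:2p7}--\eqref{eq:2p8} rather than pointwise, but it is the same inequality), and then combine $\Omega(f,P)\le v(f;\sigma_t)\,f_P$ with $\sum_P|P|\le 2^n|E|\le 2^n t$. The only cosmetic difference is that you bound $\ell(\tilde Q_j)$ directly from $|Q_j|\le t$ via Hardy--Littlewood on a single cube, whereas the paper deduces it from $|D_{j_k}^\star|\le |E^\star|\le 2^n t$; both give the same scale $\sigma_t$.
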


\noindent Here by $f^{\star\star}(t)$ we denote the Hardy function of $f^\star$ defined as $f^{\star\star}(t) = \frac{1}{t} \int_0^t f^\star(u) \mr du$, for $t\in (0,1]$. Moreover we prove the following result by applying Theorem \ref{thm:3}.

\begin{theorem} \label{thm:4}
Let $f\in Q_0\to \mb{R}^+$, $f\in L_1(Q_0)$. Then there exist constants $c_i$ for $i=1, 2, 3, 4$ depending only on $n$ such that the following holds: $c_4>1$ and
\begin{equation} \label{eq:1p12}
f^{\star\star}(t) \leq c_1 f_{Q_0} \exp\left( c_2\int_{c_3 t^\frac{1}{n}}^1 v(f;\sigma)\frac{\mr d\sigma}{\sigma}\right),
\end{equation}
for every $t\in \big(0, \frac{1}{c_4}\big]$.
\end{theorem}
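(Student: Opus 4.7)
The plan is to derive a single-step multiplicative inequality
\[
f^{\star\star}(t/2) \leq f^{\star\star}(t)\bigl(1 + 2^n v(f;\sigma_t)\bigr)
\]
from Theorem \ref{thm:3}, iterate it along the dyadic sequence $t, 2t, 4t, \ldots$ until the argument reaches order $1$, and convert the resulting product into the exponential of an integral in $\sigma$. I will write $F(t) = f^{\star\star}(t)$ for brevity.

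To obtain the one-step inequality, I will use the identity $\int_0^t (f^\star(u) - F(t))\,\mr du = 0$, which together with Theorem \ref{thm:3} gives
\[
\int_0^t (f^\star - F(t))^+\,\mr du = \tfrac{1}{2}\int_0^t |f^\star - F(t)|\,\mr du \leq 2^{n-1}\,t\,F(t)\,v(f;\sigma_t).
\]
Since $(f^\star - F(t)) \leq (f^\star - F(t))^+$ pointwise,
\[
\tfrac{t}{2}\bigl(F(t/2) - F(t)\bigr) = \int_0^{t/2}(f^\star - F(t))\,\mr du \leq \int_0^{t/2}(f^\star - F(t))^+\,\mr du \leq 2^{n-1}\,t\,F(t)\,v(f;\sigma_t),
\]
and the one-step bound follows upon dividing by $t/2$.

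For the iteration, fix $t \leq 1/c_4$ (with $c_4$ to be chosen large) and let $K = \lfloor \log_2(1/t)\rfloor$, so that $2^K t \in (1/2, 1]$ and the one-step bound is legitimate at every scale $2^k t$, $k=1,\ldots,K$. Chaining the inequalities yields
\[
F(t) \leq F(2^K t)\prod_{k=1}^K \bigl(1 + 2^n v(f;\sigma_{2^k t})\bigr) \leq 2 f_{Q_0}\prod_{k=1}^K \bigl(1 + 2^n v(f;\sigma_{2^k t})\bigr),
\]
where the last bound uses the trivial estimate $F(2^K t) \leq F(1/2) \leq 2 F(1) = 2 f_{Q_0}$. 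Taking logarithms, applying $\log(1+x) \leq x$, and exploiting the monotonicity of $v(f;\cdot)$, I will compare $\sum_{k=1}^K v(f;\sigma_{2^k t}) \leq \int_1^{K+1} v(f;\sigma_{2^s t})\,\mr ds$ and then substitute $\sigma = 2(2^s t)^{1/n}$ (so $\mr ds = \frac{n}{\log 2}\,\mr d\sigma/\sigma$). The portion of the $s$-integral on which $\sigma_{2^s t}$ saturates at $1$ has length $O(n)$ and, since $v(f;1)\leq 2$, contributes at most a constant depending on $n$, which I will absorb into $c_1$. The remaining integral becomes a multiple of $\int_{c_3 t^{1/n}}^1 v(f;\sigma)\,\mr d\sigma/\sigma$ with $c_3 = 2^{(n+1)/n}$; choosing $c_4 = 2^{n+1}$ secures $c_3 t^{1/n}\leq 1$, and explicit values $c_2 = n\,2^n/\log 2$ and $c_1 = 2\exp(2^{n+1}(n+1))$ complete the bound.

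The only substantive step is the derivation of the one-step inequality. Theorem \ref{thm:3} controls an integral over the full interval $[0,t]$, while comparing $F(t/2)$ with $F(t)$ requires information only over $[0,t/2]$; the key observation is that the signed integral on $[0,t/2]$ is majorized by its positive part alone, which in turn is exactly half of the symmetric $L^1$-quantity supplied by Theorem \ref{thm:3}. Everything after that -- the iteration and the logarithmic change of variables -- is routine.
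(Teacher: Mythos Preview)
Your proof is correct and follows essentially the same route as the paper: a one-step multiplicative bound from Theorem~\ref{thm:3}, iteration along a geometric sequence, then $1+x\le e^x$ and monotonicity of $v(f;\cdot)$ to convert the sum into the integral $\int v(f;\sigma)\,\mr d\sigma/\sigma$. The only cosmetic differences are that the paper cites Lemma~\ref{lem:2p3} for the one-step bound (you re-derive its $\gamma=2$ case directly) and then iterates with scaling factor $\gamma=e$ rather than $\gamma=2$, choosing $e$ to minimise $\gamma/\ln\gamma$; since the theorem only asserts existence of constants, this optimisation is immaterial.
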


\noindent The proof of Theorem \ref{thm:4} depends on Theorem \ref{thm:3}, and can be effectively used for us to prove the following:
\begin{theorem} \label{thm:5}
Let $f\in\mr{GR}_\mathcal{D}(Q_0,\varepsilon)$ for some $\varepsilon\in\left(0, \frac{1}{2^{n-1}}\right)$. Then for any $t\in(0,1]$ we have that
\begin{equation} \label{eq:1p13}
f^{\star\star}(t) \leq \frac{p}{p-1} f_{Q_0} t^{-\frac 1 p},\ \text{where}\ p>1\ \text{is defined by}\ \frac{p^p}{(p-1)^{p-1}} = \frac{1}{2^{n-1}\varepsilon}
\end{equation}
\end{theorem}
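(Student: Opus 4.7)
The plan is to extract from Theorem \ref{thm:3} a one-variable comparison inequality for $\phi(t) := t f^{\star\star}(t) = \int_0^t f^\star(u)\,\mr du$ and then iterate it along a geometric chain. Since $f \in \mr{GR}_{\mathcal{D}}(Q_0,\varepsilon)$ forces $v(f;\sigma) \leq \varepsilon$ for every $\sigma \in [0,1]$, Theorem \ref{thm:3} gives $\int_0^t |f^\star(u)-f^{\star\star}(t)|\,\mr du \leq 2^n\varepsilon\, t f^{\star\star}(t)$. Because $\int_0^t (f^\star(u) - f^{\star\star}(t))\,\mr du = 0$, the left-hand side equals twice $\int_0^t (f^\star(u) - f^{\star\star}(t))_+ \,\mr du$, and restricting that positive-part integral to $[0,s] \subseteq [0,t]$ while using the pointwise bound $(\cdot)_+ \geq (\cdot)$ yields, with $\beta := 2^{n-1}\varepsilon$,
\[
\phi(s) \leq (s/t+\beta)\,\phi(t), \qquad 0 < s \leq t \leq 1, \qquad (\ast)
\]
which is the only inequality I will need.

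The key observation is the telescoping identity for the critical ratio $a := ((p-1)/p)^p$. The defining relation $p^p/(p-1)^{p-1} = 1/(2^{n-1}\varepsilon)$ rearranges to $\beta = (p-1)^{p-1}/p^p$, and a direct computation then gives $a + \beta = ((p-1)/p)^{p-1} = a^{1-1/p}$. This is exactly what makes iterations of $(\ast)$ along powers of $a$ telescope without loss; equivalently, $p$ is calibrated to be the exponent for which $\phi(t) = C t^{1-1/p}$ saturates $(\ast)$.

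The proof now splits cleanly. \emph{Base case $s \in [a,1]$.} Apply $(\ast)$ with $t=1$ to obtain $\phi(s) \leq (s+\beta)f_{Q_0}$ and study $g(s) := (s+\beta)s^{-(1-1/p)} = s^{1/p} + \beta s^{1/p-1}$. One checks $g'(a)=0$ (which is where the choice $a = \beta(p-1)$ pays off), $g(a)=1$, and $g$ is increasing on $[a,1]$ with $g(1) = 1+\beta \leq p/(p-1)$, the last step being equivalent to $(p-1)^p \leq p^p$. Hence $\phi(s) \leq \tfrac{p}{p-1}\,s^{1-1/p}f_{Q_0}$ for such $s$. \emph{Iteration $s \in (0,a)$.} Take the unique integer $k \geq 1$ with $s/a^k \in [a,1]$ and iterate $(\ast)$ with $t$ successively equal to $s/a, s/a^2, \ldots, s/a^k$:
\[
\phi(s) \leq (a+\beta)^k\,\phi(s/a^k) \leq (a+\beta)^k \cdot \tfrac{p}{p-1}\,(s/a^k)^{1-1/p} f_{Q_0} = \tfrac{p}{p-1}\,s^{1-1/p}f_{Q_0},
\]
the last equality because $[(a+\beta)/a^{1-1/p}]^k = 1^k = 1$ by the telescoping identity. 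Dividing by $s$ gives \eqref{eq:1p13}.

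The essential obstacle is recognising the correct algebraic substitution: $p$ as defined in the statement is calibrated precisely so that $a = ((p-1)/p)^p$ produces the identity $a+\beta = a^{1-1/p}$, which in turn causes the geometric iteration of $(\ast)$ to collapse without loss in $k$. Once that identity is spotted, the base case is a short calculus exercise and the extension to small $s$ follows automatically by iteration.
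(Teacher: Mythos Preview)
Your proof is correct and follows essentially the same route as the paper: both combine Theorem~\ref{thm:3} with the inequality you call $(\ast)$ (which is exactly Lemma~\ref{lem:2p3} rewritten for $\phi$), choose the geometric ratio $a=((p-1)/p)^p=1/\gamma$ calibrated so that $a+\beta=a^{1-1/p}$ (equivalently $1+2^{n-1}\gamma\varepsilon=\gamma^{1/p}$), and then iterate to obtain the sharp constant $p/(p-1)$. The only cosmetic differences are that you re-derive Lemma~\ref{lem:2p3} inline via the positive-part trick and handle the base interval $[a,1]$ by analysing $g(s)=(s+\beta)s^{1/p-1}$, whereas the paper simply uses monotonicity of $f^{\star\star}$ together with $\gamma^{j}\le\gamma/t$; both lead to the same bound.
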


\noindent An immediate consequence is the following
\begin{corollary} \label{cor:1}
Let $f\in\mr{GR}_\mathcal{D}(Q_0,\varepsilon)$ for some $\varepsilon\in\left(0, \frac{1}{2^{n-1}}\right]$. Then $f\in L^q(Q_0)$ for any $q\in [1,p)$, where $p$ is defined by \eqref{eq:1p13}
\end{corollary}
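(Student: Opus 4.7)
The plan is to deduce the corollary directly from Theorem~\ref{thm:5} in two short steps: first promote the bound on $f^{\star\star}$ to a pointwise bound on $f^\star$, then integrate. Since the paper labels this an immediate consequence, I expect no serious obstacle — the only thing to be careful about is the boundary value $\varepsilon = 1/2^{n-1}$, which lies outside the hypothesis of Theorem~\ref{thm:5}.

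The key observation is that because $f^\star$ is nonincreasing on $(0,1]$, for every $t \in (0,1]$ and every $u \in (0,t]$ we have $f^\star(u) \geq f^\star(t)$, whence
\[
f^\star(t) \leq \frac{1}{t}\int_0^t f^\star(u)\,\mr du = f^{\star\star}(t).
\]
Combined with Theorem~\ref{thm:5}, this yields the pointwise decay estimate
\[
f^\star(t) \leq \frac{p}{p-1}\, f_{Q_0}\, t^{-1/p}\quad \text{for every } t \in (0,1].
\]
Using the equimeasurability of $f^\star$ and $|f| = f$ (since $f \geq 0$), for any $q \geq 1$ I would then write
\[
\int_{Q_0} f(x)^q\,\mr dx \;=\; \int_0^1 f^\star(t)^q\,\mr dt \;\leq\; \left(\frac{p}{p-1}\, f_{Q_0}\right)^{\!q} \int_0^1 t^{-q/p}\,\mr dt,
\]
and the last integral converges precisely when $q/p < 1$, i.e.\ when $q < p$. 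This gives $f \in L^q(Q_0)$ for every $q \in [1,p)$, as required.

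It remains to treat the endpoint $\varepsilon = 1/2^{n-1}$ separately, since Theorem~\ref{thm:5} assumes the strict inequality $\varepsilon < 1/2^{n-1}$. The function $p \mapsto p^p/(p-1)^{p-1}$ tends to $1$ as $p \to 1^+$ and is strictly increasing on $(1,\infty)$, so the defining relation $p^p/(p-1)^{p-1} = 1/(2^{n-1}\varepsilon) = 1$ forces $p = 1$. In that case the interval $[1,p) = [1,1) = \emptyset$, and the statement of the corollary is vacuous. Hence no separate argument is needed for the endpoint, and the corollary follows.
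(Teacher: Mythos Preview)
Your argument is correct and matches the paper's approach exactly: the paper simply states that Corollary~\ref{cor:1} is ``immediate by the statement of Theorem~\ref{thm:5}'' without supplying any details, and what you have written is precisely the standard way to fill those in. Your treatment of the endpoint $\varepsilon = 1/2^{n-1}$ is also appropriate, since there the defining equation forces $p=1$ and the conclusion is vacuous.
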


\noindent In this way we increase the integrability properties of $f$, if this belongs to the space $\mr{GR}_\mathcal{D}(Q_0,\varepsilon)$ for a certain range of $\varepsilon$'s.
\medskip

\noindent The paper is originated as follows: \\
In Section \ref{sec:2} we give some preliminaries (Lemmas) needed in subsequent sections. \\
In Section \ref{sec:3} we prove Theorems 1 and 2 and in \ref{sec:4} we provide proofs of Theorems 3,4 and 5.

\section{Preliminaries} \label{sec:2}
Here we state some Lemmas needed in subsequent sections. Those can be found in \cite{8}. The first one is the following:
\begin{lemma} \label{lem:2p1}
Let $f\in L^1(Q_0)$. Then if we define $\Omega(f,Q)$ by \eqref{eq:1p1}, for a certain cube $Q\subseteq Q_0$ we have the following equalities:
\[
\Omega(f,Q) = \frac{2}{|Q|} \int\limits_{\{x\in Q: f(x)>f_Q\}} (f(x)-f_Q)\,\mr dx =
\frac{2}{|Q|} \int\limits_{\{x\in Q: f(x)<f_Q\}} (f_Q - f(x))\,\mr dx.
\]
\end{lemma}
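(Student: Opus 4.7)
The plan is to split the integral defining $\Omega(f,Q)$ according to the sign of $f-f_Q$ and then use the defining property of $f_Q$ to show that the positive and negative parts contribute the same amount.

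First I would partition $Q$ (up to a null set) into $A = \{x \in Q : f(x) > f_Q\}$, $B = \{x \in Q : f(x) < f_Q\}$, and the set $\{f = f_Q\}$, which contributes nothing. Writing $|f-f_Q|$ as $(f-f_Q)$ on $A$ and $(f_Q - f)$ on $B$, I get
\[
|Q|\,\Omega(f,Q) \;=\; \int_Q |f(x)-f_Q|\,\mr dx \;=\; \int_A (f(x)-f_Q)\,\mr dx \;+\; \int_B (f_Q - f(x))\,\mr dx.
\]

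Next I would use the identity $\int_Q (f(x) - f_Q)\,\mr dx = 0$, which holds since $f_Q$ is the average of $f$ over $Q$. Splitting that integral over $A$, $B$, and $\{f = f_Q\}$ yields
\[
\int_A (f(x)-f_Q)\,\mr dx \;=\; \int_B (f_Q - f(x))\,\mr dx.
\]
Hence both summands in the previous display are equal, so each one equals $\tfrac{1}{2} |Q|\,\Omega(f,Q)$. Rearranging gives the claimed identities simultaneously.

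There is no real obstacle here; the only thing to be slightly careful about is the measurability of $A$ and $B$, which follows from the measurability of $f$, and the fact that the contribution from $\{f=f_Q\}$ vanishes because the integrand $|f-f_Q|$ is zero there. So the proof is essentially a one-line consequence of the definition of $f_Q$.
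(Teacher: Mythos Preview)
Your argument is correct and is the standard proof of this identity: split $\int_Q |f-f_Q|$ over $\{f>f_Q\}$ and $\{f<f_Q\}$, then use $\int_Q (f-f_Q)=0$ to see that the two pieces coincide. The paper itself does not supply a proof of this lemma; it merely states it and refers the reader to \cite{8}, so there is nothing further to compare.
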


\noindent We will also need the following
\begin{lemma} \label{lem:2p2}
Let $f:I_1\equiv [a_1,b_1]\to\mb R$ be monotone integrable on $I_1$. Suppose we are given $I=[a,b]\subseteq I_1$ such that $f_I=f_{I_1}$. Then the following inequality is true: $\Omega(f,I)\leq \Omega(f,I_1)$.
\end{lemma}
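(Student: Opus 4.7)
The plan is to reduce to the case where $f$ is non-decreasing (by replacing $f$ with $-f$, which leaves $\Omega$ unchanged) and then to compare $\Omega(f,I)$ and $\Omega(f,I_1)$ via Lemma~\ref{lem:2p1}. Write $c = f_I = f_{I_1}$, $I=[a,b]$, $I_1=[a_1,b_1]$, $\alpha = a-a_1\geq 0$, $\beta = b_1-b\geq 0$. Because $f$ is non-decreasing and $f_{I_1}=c$, a single point of $[a_1,a)$ where $f>c$ would propagate (by monotonicity) to $f>c$ on all of $I$, contradicting $f_I=c$; hence $f\leq c$ on $[a_1,a]$ and, by the mirror argument, $f\geq c$ on $[b,b_1]$. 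The same reasoning produces a splitting point $x_0\in[a,b]$ with $f\leq c$ on $[a,x_0]$ and $f\geq c$ on $[x_0,b]$.

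Next I would set $\phi = \Omega(f,I)|I|/2$ and $\phi_1 = \Omega(f,I_1)|I_1|/2$. Lemma~\ref{lem:2p1}, in its two equivalent forms, gives $\phi = \int_{x_0}^b (f-c)\,\mathrm dx = \int_a^{x_0}(c-f)\,\mathrm dx$, and the same identities for $I_1$ combined with the observation above yield $\phi_1-\phi = \int_b^{b_1}(f-c)\,\mathrm dx = \int_{a_1}^a(c-f)\,\mathrm dx$. Letting $A = f(b)-c\geq 0$ and $B = c-f(a)\geq 0$, the monotonicity of $f$ delivers four elementary bounds:
\[
\phi\leq A(b-x_0),\qquad \phi\leq B(x_0-a),\qquad \phi_1-\phi\geq \beta A,\qquad \phi_1-\phi\geq \alpha B.
\]

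The decisive step is to split $|I|=(x_0-a)+(b-x_0)$ and apply the $\alpha B$-lower bound on the first piece together with the $\beta A$-lower bound on the second:
\[
(\phi_1-\phi)\,|I| \;\geq\; \alpha B(x_0-a)\;+\;\beta A(b-x_0)\;\geq\;\alpha\phi+\beta\phi\;=\;(|I_1|-|I|)\,\phi,
\]
which rearranges to $\phi_1|I|\geq \phi|I_1|$, i.e., $\Omega(f,I)\leq \Omega(f,I_1)$.

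The main obstacle I expect is recognizing the need for this matched cross-pairing: a single uniform bound $\phi_1-\phi\geq\max(\alpha B,\beta A)$ applied across all of $|I|$ loses a factor of two, and one must pair each inequality with the precise piece of the split at $x_0$ on which the corresponding upper bound for $\phi$ is tight. The degenerate boundary cases $A=0$ or $B=0$ (where $f$ is constant equal to $c$ on one side of $x_0$) force $\phi=0$ and hence $\Omega(f,I)=0$, so the claim is trivial there.
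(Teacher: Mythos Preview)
Your argument is correct. Note, however, that the paper itself does not supply a proof of Lemma~\ref{lem:2p2}: Section~\ref{sec:2} merely states the lemma and refers the reader to \cite{8}, so there is no in-paper argument to compare against. Your self-contained approach---extracting the four monotonicity bounds $\phi\le A(b-x_0)$, $\phi\le B(x_0-a)$, $\phi_1-\phi\ge\beta A$, $\phi_1-\phi\ge\alpha B$ and then cross-pairing them across the split $|I|=(x_0-a)+(b-x_0)$---is a clean elementary proof that uses only Lemma~\ref{lem:2p1}. The only cosmetic point is that the splitting assertion ``$f\le c$ on $[a,x_0]$ and $f\ge c$ on $[x_0,b]$'' may fail at the single point $x_0$ when $f$ jumps there, but this is a measure-zero issue with no effect on any of the integrals.
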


\noindent At last we state
\begin{lemma} \label{lem:2p3}
Let $f$ be non-increasing, summable on $(0,1]$ and let also $F(t) = \frac{1}{t} \int_0^t f(u)\mr du$, for $t\in (0,1]$. Then for ant constant $\gamma >1$ the following inequality is true:
\[
F\left(\frac t \gamma \right) - F(t) \leq \frac \gamma 2 \frac 1 t \int_0^t |f(u) - F(t)|\mr du,\quad t\in(0,1].
\]
\end{lemma}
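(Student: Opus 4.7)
The plan is to start from the identity
\[
F(t/\gamma) - F(t) = \frac{\gamma}{t}\int_0^{t/\gamma}\bigl(f(u) - F(t)\bigr)\,\mr du,
\]
which follows from $\frac{t}{\gamma} F(t/\gamma) = \int_0^{t/\gamma} f$ together with the trivial cancellation $\frac{\gamma}{t}\int_0^{t/\gamma} F(t)\,\mr du = F(t)$. This reduces the problem to bounding the right-hand integral by $\frac{1}{2}\int_0^t |f(u) - F(t)|\,\mr du$.

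Next I would exploit monotonicity. Since $f$ is non-increasing and summable, and $F(t)$ is the mean of $f$ on $(0,t]$, there exists a point $s\in(0,t]$ with $f(u)\geq F(t)$ for $u\in(0,s)$ and $f(u)\leq F(t)$ for $u\in(s,t]$. Consequently the function $a\mapsto \int_0^a (f(u)-F(t))\,\mr du$ is non-decreasing on $[0,s]$ and non-increasing on $[s,t]$, so it attains its maximum at $a=s$, and that maximum equals $\int_0^t (f(u)-F(t))^+\,\mr du$. In particular, whether $t/\gamma\leq s$ or $t/\gamma>s$, we have
\[
\int_0^{t/\gamma}\bigl(f(u) - F(t)\bigr)\,\mr du \;\leq\; \int_0^t\bigl(f(u) - F(t)\bigr)^+\,\mr du.
\]

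Finally I would use the balancing identity $\int_0^t (f(u)-F(t))\,\mr du = 0$, which is simply the definition of $F(t)$. This yields
\[
\int_0^t\bigl(f(u)-F(t)\bigr)^+\,\mr du = \int_0^t\bigl(F(t)-f(u)\bigr)^+\,\mr du = \frac{1}{2}\int_0^t \bigl|f(u) - F(t)\bigr|\,\mr du,
\]
and combining the three displays gives the claimed inequality with the constant $\gamma/2$.

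There is no real obstacle in this argument; the only point that requires a moment's thought is the case distinction $t/\gamma\lessgtr s$, which is handled uniformly by the maximum-at-$s$ observation. The strategy is essentially: use the mean-zero property of $f-F(t)$ together with monotonicity to reduce the truncated integral to a one-sided oscillation, which is exactly half of the full oscillation.
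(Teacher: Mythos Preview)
Your argument is correct. The paper does not actually prove Lemma~\ref{lem:2p3}; it only states it and refers the reader to \cite{8}, so there is no in-paper proof to compare against. Your route---the identity $F(t/\gamma)-F(t)=\frac{\gamma}{t}\int_0^{t/\gamma}(f-F(t))$, the observation that $a\mapsto\int_0^a(f-F(t))$ is unimodal by monotonicity of $f$ and hence bounded by $\int_0^t(f-F(t))^+$, and the balancing identity $\int_0^t(f-F(t))^+=\tfrac12\int_0^t|f-F(t)|$---is the standard one and is essentially what one finds in \cite{8}; in particular the last step is exactly the content of Lemma~\ref{lem:2p1} applied on the interval $(0,t]$.
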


\section{$f_d$ as an element of $\mr{BMO}\left((0,1]\right)$} \label{sec:3}
Here we suppose that $f$ is defined on $Q_0\equiv [0,1]^n$, is real valued and integrable. We proceed to the (following \cite{6})
\begin{proof}[Proof of theorem \ref{thm:1}]
Suppose that $f\in\mr{BMO}_\mathcal{D}\left([0,1]^n\right)$. We shall prove that $f_d\in\mr{BMO}((0,1])$ and that
\begin{equation} \label{eq:2p1}
\|f_d\|_\star \leq 2^n\|f\|_{\star,\mathcal{D}}.
\end{equation}
For the proof of \eqref{eq:2p1} we need to prove the following inequality
\begin{equation} \label{eq:2p2}
\frac{1}{|J|} \int_J \left|f_d(u)-\left(f_d\right)_J\right|\mr du \leq 2^n \|f\|_{\star,\mathcal{D}},
\end{equation}
for any $J$ interval of $(0,1]$. Fix such a $J$. We set $\alpha = \frac{1}{|J|} \int_J f_d = (f_d)_J$.

\medskip
\noindent \rnum 1) We first consider the case where
\begin{equation} \label{eq:2p3}
\alpha \ge \int_{[0,1]^n} f(x)\mr dx
\end{equation}
\noindent We consider now the family $(D_j)_j$ of those cubes $I\in \mathcal{D}$ maximal with respect to the relation $\subseteq$ under the condition $\frac{1}{|I|} \int_{I} f>\alpha$. Certainly, because of \eqref{eq:2p3} we have that any such cube must be a strict subset of $[0,1]^n$.
Additionally, because of the maximality of every $D_j$ and the tree structure of $\mathcal{D}$ we have that $(D_j)_j$ is a pairwise disjoint subfamily of the tree $\mathcal{D}$. Certainly for any such cube $D_j$ we have that $\frac{1}{|D_j|} \int_{D_j} f>\alpha$, so as a consequence $\frac{1}{|E|} \int_E f>\alpha$, where $E$ denotes the union of the elements of the family $(D_j)_j$, that is $E=\cup_jD_j$.
Now for any dyadic cube $I\neq [0,1]^n$ we denote as $I^\star$ the father of $I$ in $\mathcal{D}$, that is the dyadic cube for which if we bisect it's sides we produce $2^n$ dyadic subcubes of $I^\star$, one of which is $I$. Now we consider for any $D_j$ the respective element of $\mathcal{D}$, $D_j^\star$. \\
We look at the family $(D_j^\star)_j$. Certainly this is not necessarily pairwise disjoint. We consider now a maximal subfamily of $(D_j^\star)_j$, denoted as $(D_{j_k}^\star)_k$, under the relation of $\subseteq$. This is pairwise disjoint and $\cup_k D^\star_{j_k} = \cup_j D_j$.
Moreover for any $k$, $D_{j_k}^\star \supsetneq D_{j_k}$. Additionally by the definition of $E$ and the dyadic version of the Lebesque differentiation theorem, we have that $f(x) \leq \alpha$, for almost every $x\in [0,1]^n \setminus E$. Moreover by the maximality of $D_{j_k}$ we must have that $f_{D_{j_k}^\star} \leq \alpha$, for any $k$.
We now set $E^\star = \cup_k D_{j_k}^\star$. Then $E \subsetneq E^\star$ and $|E| \geq \frac{|E^\star|}{2^n}$, by construction.
We look now upon the function $f_d: (0,1] \to \mb{R}$. Since $\alpha = (f_d)_J > f_{[0,1]^n}$ it is easy to see (since $f_d$ is non-increasing) that there exists $t\in(0,1]$, such that $J\subseteq [0,t]$ and $\frac 1 t \int_0^t f_d(u)\,\mr du = \alpha$. That is $(f_d)_{[0,t]} = (f_d)_J$. \\
We now take advantage of Lemma \ref{lem:2p2}. We obtain immediately that:
\begin{equation} \label{eq:2p4}
\Omega(f_d,J) = \frac{1}{|J|} \int_J \left|f_d(u)-\left( f_d\right)_J\right|\mr du \leq
\Omega(f_d, [0,t]) = \frac{1}{t} \int_0^t |f_d(u)-\alpha|\,\mr du.
\end{equation}
Since now $\frac{1}{|E|} \int_E f > \alpha$ it is immediate because, $f_d$ is non-increasing, that
\[
\frac{1}{|E|} \int_0^{|E|} f_d(u)\,\mr du \geq \frac{1}{|E|} \int_E f(x)\,\mr dx > \alpha = \frac{1}{t} \int_0^t f_d(u)\,\mr du,
\]
so that the measure of $E$ must satisfy $|E|\leq t$.
Thus, since by the above comments mentioned in this proof, we see that $|E^\star| \leq 2^n |E| \leq 2^n t$.

By \eqref{eq:2p4} now, it is enough to prove that
\[
\frac 1 t \int_0^t |f_d(u)-\alpha|\,\mr du \leq 2^n \|f\|_{\star,\mathcal{D}},
\]
for the case \rnum{1}) to be completed. For this reason we proceed as follows: By using Lemma \ref{lem:2p1} we have that
\begin{equation} \label{eq:2p5}
\int\limits_0^t \left|f_d(u)-\alpha\right|\mr du = 2\int_{\left\{u\in(0,t]: f_d(u)>\alpha\right\}} \left(f_d(u)-\alpha\right)\mr du,
\end{equation}
since $\alpha = (f_d)_{(0,t]}$.

The right side now of \eqref{eq:2p5} equals to $2\int_{\{f>\alpha\}}(f(x)-\alpha)\,\mr dx$, because of the equimeasurability of $x\in [0,1]^n$ $f$ and $f_d$ and the fact that $\alpha = \frac{1}{t} \int_0^t f_d(u)\,\mr du \geq f_d(t)$.\\
Thus since $f(x) \leq \alpha$, for almost every element of $[0,1]^n\setminus E^\star$, \eqref{eq:2p5} and the remarks above give that
\begin{multline} \label{eq:2p6}
\int_0^t |f_d(u)-\alpha|\,\mr du = 2\int_{\left\{ x\in E^\star \equiv \cup_k D_{j_k}^\star: f(x)>\alpha\right\}} (f(x)-\alpha)\,\mr dx = \\
2\int_{\left(\cup_k D_{j_k}^\star\right) \cap \left\{f>\alpha\right\}} (f(x)-\alpha)\,\mr dx = 2\sum\limits_k \int_{D_{j_k}^\star \cap \{f>\alpha\}} (f(x)-\alpha)\,\mr dx.
\end{multline}
We prove now that for any $k$, the following inequality holds
\begin{equation} \label{eq:2p7}
\int_{D_{j_k}^\star \cap \{f>\alpha\}} (f(x)-\alpha)\,\mr dx \leq \int_{D_{j_k}^\star \cap \big\{f>f_{D_{j_k}^\star}\big\}}\left(f(x)-f_{D_{j_k}^\star}\right)\mr dx,
\end{equation}
Indeed, \eqref{eq:2p7} is equivalent to
\begin{equation} \label{eq:2p8}
\ell_k \equiv
\int\limits_{D_{j_k}^\star \cap \big\{f_{D_{j_k}^\star} < f \leq \alpha\big\}} f(x)\,\mr dx \geq
f_{D_{j_k}^\star}\left|D_{j_k}^\star \cap \left\{f>f_{D_{j_k}^\star}\right\}\right| - \alpha \left|D_{j_k}^\star \cap \{f>\alpha\}\right|,
\end{equation}
This is now easy to prove since
\begin{align*}
\ell_k & \geq f_{D_{j_k}^\star} \left| D_{j_k}^\star \cap \left\{f_{D_{j_k}^\star}<f\leq \alpha\right\}\right| \\
 & = f_{D_{j_k}^\star} \left| D_{j_k}^\star \cap \left\{f>f_{D_{j_k}^\star}\right\}\right| - f_{D_{j_k}^\star}\left|D_{j_k}^\star \cap \{f>\alpha\}\right| \\
  & \geq f_{D_{j_k}^\star} \left| D_{j_k}^\star \cap \left\{ f>f_{D_{j_k}^\star}\right\}\right|- \alpha \left|D_{j_k}^\star \cap \{f>\alpha\}\right|
\end{align*}
since $f_{D_{j_k}^\star}\leq \alpha$, $\forall k$. \\
But the last inequality is exactly \eqref{eq:2p8}, so by \eqref{eq:2p6} and \eqref{eq:2p7} we have that:
\begin{multline} \label{eq:2p9}
\int_0^t \left|f_d(u)-\alpha\right|\mr du \leq
2\sum_k \int_{D_{j_k}^\star \cap \big\{f>f_{D_{j_k}^\star}\big\}} \left(f(x)-f_{D_{j_k}^\star}\right)\mr dx = \\
\sum_k \int_{D_{j_k}^\star} \left|f(x)-f_{D_{j_k}^\star}\right|\mr dx =
\sum_k \left|D_{j_k}^\star\right|\Omega\left(f,D_{j_k}^\star\right) \leq \\
\left(\sum_k\left|D_{j_k}^\star\right|\right)\|f\|_{\star,\mathcal{D}} =
|E^\star|\,\|f\|_{\star,\mathcal{D}} \leq 2^n t \|f\|_{\star,\mathcal{D}},
\end{multline}
where the first equality in \eqref{eq:2p9} is due to Lemma \ref{lem:2p1}. \\
Thus we have proved that
\[
\frac{1}{t} \int_0^t |f_d(u)-\alpha|\,\mr du \leq 2^n \|f\|_{\star,\mathcal{D}},
\]
and the proof of case \rnum{1}) is complete.

\medskip
We are going now to give a brief discussion for the second case, since this is analogous to the first one.

This is the following: \\
\rnum{2}) We assume that $J$ is a subinterval of $(0,1]$ and that
\begin{equation} \label{eq:2p10}
\alpha = \frac{1}{|J|} \int_J f_d(u)\,\mr du < \int_{[0,1]^n} f(x)\,\mr dx.
\end{equation}
We prove that $\Omega(f_d,J)\leq 2^n\|f\|_{\star,\mathcal{D}}$. \\
By \eqref{eq:2p10} we choose $t\in[0,1)$ such that $\alpha = \frac{1}{t}\int_{1-t}^1 f_d(u)\,\mr du$ and $J\subseteq [1-t,1]$.
We choose the maximal of $(D_j)_j$, $D_j\in \mathcal{D}$ for every $j$ such that $\frac{1}{|D_j|} \int_{D_j} f\leq \alpha$. This is possible in view of \eqref{eq:2p10} and in view of \eqref{eq:2p10} we have that $D_j\neq X$ and because of it's maximality it is pairwise disjoint.
We pass as before to the pairwise disjoint family $(D_{j_k}^\star)_k$, for which we have $E^\star = \cup_k D_{j_k}^\star = \cup_j D_j^\star \supseteq \cup D_j = E$, $|E^\star| \leq 2^n |E|$ and that $h(x)\geq \alpha$, for almost every $x\in [0,1]^n\setminus E$. As before we have
\begin{multline} \label{eq:2p11}
\Omega(f_d,J) =
\frac{1}{|J|} \int_J \left|f_d(u) - \left(f_d\right)_J\right|\mr du \leq
\frac{1}{t} \int_{1-t}^1 |f_d(u)-\alpha|\,\mr du = \\
\frac{2}{t}\int_{\left\{u\in [1-t,1]: f_d(u)<\alpha\right\}} \left(\alpha-f_d(u)\right)\mr du =
\frac{2}{t}\int_{\left\{x\in Q_0\equiv [0,1]^n: f(x)<\alpha\right\}} \left(\alpha-f(x)\right)\mr dx = \\
\frac{2}{t} \int_{\left\{x\in E^\star: f(x)<\alpha\right\}} \left(\alpha-f(x)\right)\mr dx =
\frac{2}{t} \sum_k \int_{D_{j_k}^\star \cap \{f<\alpha\}} \left(\alpha-f(x)\right)\mr dx.
\end{multline}

By the fact that $f_{D_{j_k}^\star} \geq \alpha$ and the same reasoning as before we conclude that for any $k$:
\begin{equation} \label{eq:2p12}
\int_{D_{j_k}^\star\cap \{f<\alpha\}} (\alpha-f(x))\,\mr dx \leq \int_{D_{j_k}^\star \cap \big\{f<f_{D_{j_k}^\star}\big\}} \left( f_{D_{j_k}^\star} - f(x) \right)\mr dx.
\end{equation}
Thus by \eqref{eq:2p11} and \eqref{eq:2p12} we obtain:
\begin{multline} \label{eq:2p13}
\Omega(f_d,J) \leq
\frac{2}{t} \sum_k \int_{D_{j_k}^\star \cap \big\{ f<f_{D_{j_k}^\star}\big\}} \left( f_{D_{j_k}^\star}-f(x)\right)\mr dx = \\
\frac{1}{t} \sum_k |D_{j_k}^\star| \Omega\left(f,D_{j_k}^\star\right) \leq
\frac{|E^\star|}{t} \|f\|_{\star,\mathcal{D}} \leq
\frac{2^n |E|}{t} \|f\|_{\star,\mathcal{D}}.
\end{multline}
As before we can prove that $|E| \leq t$, so then \eqref{eq:2p13} gives the result needed to prove. \\
Thus we proved that for any $J\subseteq [0,1]$ we have $\Omega(f_d,J) \leq 2^n \|f\|_{\star,\mathcal{D}}$, or that $\|f_d\|_\star \leq 2^n \|f\|_{\star,\mathcal{D}}$ and our result is now complete.
\end{proof}
We are now able to prove the following
\begin{ctheorem}{3.1} \label{thm:3p1}
Let $f:Q_0\equiv [0,1]^n \to \mr R$ be such that $\int_{Q_0}f = 0$ and that $f\in \mr{BMO}_{\mathcal{D}}\left([0,1]^n\right)$. Then
\[
f_d(t) \leq \frac{\|f\|_{\star,\mathcal{D}}}{b} \ln\left[\frac{B}{t}\right],
\]
for some constants $b,B>0$ depending only in the dimension $n$.
\end{ctheorem}

\begin{proof}
We define $F(t) = \frac{1}{t}\int_0^t f_d(u)\,\mr du$. Then by Lemma \ref{lem:2p3} $F\left(\frac{t}{\alpha}\right) - F(t) \leq \frac{\alpha}{2} \frac{1}{t} \int_0^t \left|f_d(u) - F(t)\right|\mr du$, for any $t\in (0,1]$ and $\alpha>1$. Thus
\begin{equation} \label{eq:2p14}
F\left(\frac{t}{\alpha}\right) - F(t) \leq \frac{\alpha}{2} \Omega\left(f_d,[0,t]\right) \leq \frac{\alpha}{2}\|f_d\|_\star \leq 2^{n-1} \alpha \|f\|_{\star,\mathcal{D}},
\end{equation}
by using Theorem \ref{thm:1}. By \eqref{eq:2p14} now we have for any $\alpha>1$ the following inequalities
\begin{equation} \label{eq:2p15}
F\left(\frac{1}{\alpha^i}\right) - F\left(\frac{1}{\alpha^{i-1}}\right) \leq 2^{n-1} \alpha \|f\|_{\star,\mathcal{D}},
\end{equation}
for any $i=1, 2, \ldots, k, k+1$ and for any fixed $k\in \mb N$.
Summing inequalities \eqref{eq:2p15} we obtain as a consequence that
\begin{multline} \label{eq:2p16}
F\left(\frac{1}{\alpha^{k+1}}\right) - F(1) \leq
(k+1) 2^{n-1} \alpha \|f\|_{\star,\mathcal{D}} \implies
(\text{since}\ \int_{[0,1]} f=0) \\
F\left(\frac{1}{\alpha^{k+1}}\right) \leq
\left((k+1) 2^{n-1} \alpha \right) \|f\|_{\star,\mathcal{D}},
\end{multline}
Fix now $t\in(0,1]$ and $\alpha>1$. Then for a unique $k\in\mb N$ we have that
\begin{multline} \label{eq:2p17}
\frac{1}{\alpha^{k+1}} < t \leq \frac{1}{\alpha^k}\implies
k\leq \frac{1}{\ln(\alpha)} \ln\left(\frac{1}{t}\right) \overset{\eqref{eq:2p16}}{\implies} \\
f_d(t) \leq \frac{1}{t} \int_0^t f_d(u)\,\mr du =
F(t) \leq F\left(\frac{1}{\alpha^{k+1}}\right) \leq
\left((k+1) 2^{n-1} \alpha\right)\|f\|_{\star,\mathcal{D}} \leq \\
\left(\left[\frac{1}{\ln(\alpha)}\ln\left(\frac{1}{t}\right)+1\right] 2^{n-1} \alpha\right) \|f\|_{\star,\mathcal{D}}.
\end{multline}
Now the function $h$ defined for any $\alpha>1$, by $h(\alpha)=\frac{\alpha}{\ln(\alpha)}$ takes it's minimum value at $\alpha=\mr e$. Thus for this value of $\alpha$, we obtain by \eqref{eq:2p17}
\[
f_d(t) \leq \left[\ln\left(\frac{1}{t}\right) 2^{n-1} \mr e + 2^{n-1} \mr e\right] \|f\|_{\star,\mathcal{D}} =
\frac{\|f\|_{\star,\mathcal{D}}}{b} \left[\ln\left(\frac{B}{t}\right)\right],\ \text{for any}\ t\in(0,1]\]
where $b=\frac{1}{2^{n-1}\mr e}$, $B=\mr e$.
\end{proof}

\medskip
\noindent We now proceed to
\begin{proof}[Proof of Theorem \ref{thm:2}]
For any $f\in\mr{BMO}_\mathcal{D}\left([0,1]^n\right)$ we prove that
\begin{equation} \label{eq:2p18}
\left|\left\{x\in Q_0: \left(f(x)-f_{Q_0}\right)>\lambda\right\}\right| \leq B \exp\left(-\frac{b\lambda}{\|f\|_{\star,\mathcal{D}}}\right),
\end{equation}
for every $\lambda>0$ and the above values of $b,B$. \\
We fix a $\lambda > 0$ and suppose without loss of generality that $f_{Q_0}=0$. We set $A_\lambda = \left\{x\in Q_0: f(x)>\lambda\right\}$. In order to prove \eqref{eq:2p18}, we just need to prove that $|A_\lambda| \leq B\exp\left(-\frac{b\lambda}{\|f\|_{\star,\mathcal{D}}}\right)$, for this value of $\lambda>0$.
We have $|A_\lambda| = |\{f>\lambda\}| = |\{f_d>\lambda\}| \leq \left|\left\{ t\in (0,1]: \frac{\|f\|_{\star,\mathcal{D}}}{b}\ln\left(\frac{B}{t}\right) > \lambda\right\}\right|$, since by Theorem \ref{thm:3p1} $f_d(t) \leq \frac{\|f\|_{\star,\mathcal{D}}}{b} \ln\left(\frac{B}{t}\right)$, for every $t\in(0,1]$. Thus we have that
\[
|A_\lambda| \leq
\left|\left\{ t\in(0,1]: t < \exp\left(-\frac{b\lambda}{\|f\|_{\star,\mathcal{D}}}\right)\right\}\right| \leq
B\exp\left(-\frac{b\lambda}{\|f\|_{\star,\mathcal{D}}}\right)
\]
\end{proof}

\begin{remark} \label{rem:3p1}
By considering the results of this section it is worth mentioning the following. Suppose that $f: [0,1]^n\to \mr R^+$ be such that $\|f\|_{\star,\mathcal{D}}< +\infty$. Because $f$ is non-negative we must have that $f_d = |f|_d = f^\star$, on $(0,1]$.
Thus we have that for any such $f$ we must have that $\|f^\star\|_\star \leq 2^n \|f\|_{\star,\mathcal{D}}$ and the inequality $\left|\left\{ x\in Q_0:\left|f(x)-f_{Q_0}\right| > \lambda \right\}\right| \leq B \exp\left(-\frac{b\lambda}{\|f\|_{\star,\mathcal{D}}}\right)$, for every $\lambda>0$ and the above mentioned values of $b$ and $B$.
\end{remark}

\section{The dyadic Gurov-Reshetnyak condition} \label{sec:4}
We again consider functions $f: Q_0\equiv [0,1]^n \to \mb R^+$ such that $f\in L^1(Q_0)$ and the following condition is satisfied
\[
\Omega(f,Q) \equiv \frac{1}{|Q|} \int_Q |f(x) - f_Q|\,\mr dx \leq \varepsilon f_Q,\quad \forall Q\in \mathcal{D},
\]
for some $\varepsilon\in (0,2)$, independent of the cube $Q$. As we noted in Section \ref{sec:1} we say then that $f\in \mr{GR}_\mathcal{D}(Q_0,\varepsilon)$.
Define the function $v(f; \cdot)$ by \eqref{eq:1p10}.\\
We are going to give the
\begin{proof}[Proof of Theorem \ref{thm:3}]
We define $\sigma_t = \min\left(2 t^{\frac{1}{n}}, 1\right)$, for every $t\in (0,1]$ and $B_t = v(f; \sigma_t)$. We shall prove that for an $t\in (0,1]$ we have that
\[
\frac{1}{t} \int_0^t |f^\star(u) - f^{\star\star}(t)|\,\mr du \leq 2^n B_t f^{\star\star}(t).
\]
Fix a $t\in (0,1]$ and set $\alpha = f^{\star\star}(t)$. Then $\alpha > f_{Q_0} = \int_{[0,1]^n} f(x)\,\mr dx = f^{\star\star}(1)$, since $f^\star$ is non-increasing we define now the following operator
\[
M_d\varphi(x) = \sup\left\{ \frac{1}{|Q|} \int_Q |\varphi(y)|\,\mr dy: x\in Q\in \mathcal{D}\right\}
\]
for every $\varphi\in L^1(Q_0)$, where $\mathcal{D}$ is as usual the class of all dyadic subcubes of $Q_0$. This is called the dyadic maximal operator with respect to the tree $\mathcal{D}$. \\
We consider the set $E = \left\{M_df > \alpha\right\}$. For any $x\in E$, there exists $Q_x\in \mathcal{D}: x\in Q_x$ and $\frac{1}{|Q_x|} \int_{Q_x} f > \alpha$.
Consider for every such $x$ the collection of all $Q_x\in \mathcal{D}$ with the above property and choose the one with maximal measure. Note that each two sets of the above collection have the property that one of them contains the other, because of the tree structure of $\mathcal{D}$.

From the above remarks we have that $E$ can be written as $E = \cup_j D_j$, where $(D_j)_j$ is a pairwise disjoint family of cubes in $\mathcal{D}$, maximal under the condition $\frac{1}{|D_j|} \int_{D_j} f > \alpha$.
Since $\alpha > f_{Q_0} = \int_{[0,1]^n} f$ for any such cube we have that $D_j \neq [0,1]^n$. Let also $D_j^\star$ be the father of $D_j$ in $\mathcal{D}$, for every $j$.
By the maximality of $D_j$ we have that $\frac{1}{|D_j^\star|} \int_{D_j^\star} f \leq \alpha$, or that $f_{D_j^\star} \leq \alpha$. We set now $E^\star = \cup_j D_j^\star$. \\
Then $E^\star$ can be written as $E^\star = \cup_k D_{j_k}^\star$, where the family $\left(D_{j_k}^\star\right)_k$ is a maximal subfamily of $(D_j)_j$ under the relation $\subseteq$. Because of its maximality, this must be disjoint.
Of course by the dyadic form of the Lebesque differentiation theorem we have that for almost every $x\notin E$, $x\in [0,1]^n$, the following is satisfied:
\begin{equation} \label{eq:4p1}
f(x) \leq M_df(x) \leq \alpha = f^{\star\star}(t).
\end{equation}
We consider now the following quantity $L_t \equiv \int_0^t |f^\star(u) - f^{\star\star}(t)|\,\mr du$, which in view of Lemma \ref{lem:2p1} can be written as
\begin{equation} \label{eq:4p2}
L_t = 2 \int_{\left\{ u\in(0,t]: f^\star(u) \geq \alpha \right\}} (f^\star(u) - \alpha)\,\mr du,
\end{equation}
By \eqref{eq:4p2} we have that
\begin{equation} \label{eq:4p3}
L_t = 2\int_{\left\{ x\in[0,1]^n: f(x) > \alpha \right\}} (f(x)-\alpha)\,\mr dx,
\end{equation}
because of the equimeasurability of $f$ and $f^\star$ and the fact that $\alpha = f^{\star\star}(t) = \frac{1}{t} \int_0^t f^\star(u)\,\mr du \geq \int_0^1 f^\star$. \\
Then since $E \subseteq E^\star$ and because of \eqref{eq:4p1} we have as a consequence from \eqref{eq:4p3} that
\begin{multline} \label{eq:4p4}
L_t = 2\int_{E^\star \cap \left\{ x\in Q_0: f(x)>\alpha\right\}} (f(x)-\alpha)\,\mr dx =
2\int _{\left(\cup D_{j_k}^\star\right) \cap \{f>\alpha\}} (f(x)-\alpha)\,\mr dx = \\
\sum_k \int_{D_{j_k}^\star \cap \{f>\alpha\}} (f(x)-\alpha)\,\mr dx.
\end{multline}
Is is now easy to show, as in Section \ref{sec:3} that the following inequality is true
\begin{equation} \label{eq:4p5}
\int_{D_{j_k}^\star \cap \{f>\alpha\}} (f(x)-\alpha)\,\mr dx \leq \int_{D_{j_k}^\star \cap \big\{f>f_{D_{j_k}^\star}\big\}} \left(f(x)-f_{D_{j_k}^\star}\right)\mr dx,\ \text{for any}\ k.
\end{equation}
\eqref{eq:4p4} now, in view of \eqref{eq:4p5} becomes:
\begin{multline*}
L_t \leq \sum_k 2\int_{\big\{x\in D_{j_k}^\star: f(x)>f_{D_{j_k}^\star}\big\}} \left(f(x)-f_{D_{j_k}^\star}\right)\mr dx = \\
\sum_k \int_{D_{j_k}^\star} \left|f(x)-f_{D_{j_k}^\star}\right|\mr dx =
\sum \left|D_{j_k}^\star\right| \Omega\left(f, D_{j_k}^\star\right),
\end{multline*}
where the first equality holds because of Lemma \ref{lem:2p1}. \\
Now  by the definition of $E$ and $\alpha$ we immediatelly have that
\[
\frac{1}{|E|} \int_0^{|E|} f^\star(u)\,\mr du \geq
\frac{1}{|E|} \int_E f(x)\,\mr dx > \alpha =
\frac{1}{t} \int_0^t f^\star(u)\,\mr du \implies
|E| \leq t,
\]
since $f^\star$ is non-increasing.
Thus by the construction of $E^\star$ we have that
\[
|E^\star| = \sum_k |D_{j_k}^\star| \leq
\sum_k 2^n \left|D_{j_k}^\star \cap E\right| =
2^n |E| \leq 2^n t.
\]

Additionally, for any $k$ we have that $|D_{j_k}^\star| \leq |E^\star| \leq 2^n t$, thus $\ell(D_{j_k}^\star)\leq 2 t^\frac{1}{n}$, for every $k$. \\
Thus we immediately have by the definition of the function $v(f;\cdot)$, that $\Omega(f,D_{j_k}^\star) \leq v(f;\sigma_t) f_{D_{j_k}^\star} \leq v(f;\sigma_t) \alpha = v(f;\sigma_t) f^{\star\star}(t)$, where $\sigma_t = \min\big\{ 2t^\frac{1}{n}, 1\big\}$, for any $t\in (0,1]$. So as a consequence from \eqref{eq:2p8} we obtain $L_t \leq |E^\star| v(f;\sigma_t) f^{\star\star}(t) \leq 2^n t f^{\star\star}(t) B_t \implies \frac{1}{t} \int_0^t |f^\star(u) - f^{\star\star}(t)|\,\mr du \le 2^n B_t f^{\star\star}(t)$.

Thus the proof of our Theorem is complete.
\end{proof}

\noindent We proceed now to the
\begin{proof}[Proof of theorem \ref{thm:4}]
We suppose that we are given $f: Q_0\equiv[0,1]^n \to \mb R^+$ such that $f\in L^1(Q_0)$. By Lemma \ref{lem:2p3} we have that
\begin{equation} \label{eq:4p6}
f^{\star\star}\left(\frac{t}{\gamma}\right) - f^\star(t) \leq
\frac{\gamma}{2} \frac{1}{t} \int_0^t |f^\star(u)-f^{\star\star}(t)|\,\mr du,
\end{equation}
for $t\in (0,1]$ and any $\gamma>1$. \\
Let $t\in (0,1]$, because of Theorem \ref{thm:3} and \eqref{eq:4p6} we have that
\begin{equation} \label{eq:4p7}
f^{\star\star}\left(\frac{t}{\gamma}\right) - f^{\star\star}(t) \leq 2^{n-1}\gamma B_t f^{\star\star}(t) \Rightarrow
f^{\star\star}\left(\frac{t}{\gamma}\right) \leq \left(1 + 2^{n-1} \gamma B_t\right) f^{\star\star}(t),
\end{equation}
We consider now those $t$ for which $t\in\left(0, \frac{1}{2^n \gamma}\right]$. The choice of $\gamma$ will be made later. \\
We set $s = \left[\frac{\ln\left(\frac{1}{2^n t}\right)}{\ln(\gamma)}\right] \in \mb N^\star$. Then we have that $\gamma^s \leq \frac{1}{2^n t} < \gamma^{s+1} \implies \gamma^s t > \frac{1}{2^n \gamma}$. As a consequence we produce
\begin{equation} \label{eq:4p8}
f^{\star\star}(\gamma^s t)\leq
f^{\star\star}\left(\frac{1}{2^n \gamma}\right) =
2^n \gamma \int_0^{\frac{1}{2^n\gamma}} f^\star \leq
2^n \gamma \int_0^1 f^\star =
2^n \gamma f_{Q_0}.
\end{equation}

Now in view of \eqref{eq:4p7} we must have that
\begin{multline} \label{eq:4p9}
f^{\star\star}\left(\frac{t}{\gamma}\right) \leq
\left(1 + 2^{n-1}\gamma B_t\right) f^{\star\star}(t) \leq
\left(1 + 2^{n-1}\gamma B_t\right) \left(1 + 2^{n-1}\gamma B_{\gamma t}\right) f^\star(2t) \leq \\
\ldots \leq
\prod_{i=0}^s \left(1 + 2^{n-1}\gamma B_{\gamma^it}\right) f^{\star\star}(\gamma^st),
\end{multline}
where $s$ is as above. So that \eqref{eq:4p8} and \eqref{eq:4p9} give
\begin{equation} \label{eq:4p10}
f^{\star\star}\left(\frac{t}{\gamma}\right) \leq 2^n \gamma f_{Q_0} \exp\left(2^{n-1} \gamma \sum_{i=0}^s B_{\gamma^it}\right).
\end{equation}
In view of the inequality $1+x \leq \mr e^x$, which holds for every $x>0$. By the choice of $s$ we have that $(\gamma^i t) 2^n \leq 1$. \\
Thus by the definition of the function $t \longmapsto B_t$ we have
\begin{equation} \label{eq:4p11}
B_{\gamma^i t} = v\left(f; 2(\gamma^i t)^\frac{1}{n}\right),\quad \text{for every}\ \ i=0, 1, 2, \ldots, s.
\end{equation}
Thus
\begin{multline} \label{eq:4p12}
\ell_{k,n} = \int_{2(\gamma^k t)^\frac{1}{n}}^{2(\gamma^{k+1}t)^\frac{1}{n}} v(f;\sigma)\,\frac{\mr d\sigma}{\sigma} \geq \\
v\left(f;2(\gamma^k t)^\frac{1}{n}\right) \left\{ \ln\left[2(\gamma^{k+1}t)^\frac{1}{n}\right] - \ln\left[2(\gamma^k t)^\frac{1}{n}\right]\right\},
\end{multline}
for every $k\in\{0, 1, 2,\ldots, s\}$,
in view of the fact that the function $\sigma \longmapsto v(f;\sigma)$ is non-decreasing. \\
We immediately get from \eqref{eq:4p12} that
\begin{equation} \label{eq:4p13}
\int_{2(\gamma^kt)^\frac{1}{n}}^{2(\gamma^{k+1}t)^\frac{1}{n}} v(f;\sigma)\,\frac{\mr d\sigma}{\sigma} \geq
v\left(f; 2(\gamma^kt)^\frac{1}{n}\right) \ln\left(\gamma^\frac{1}{n}\right),
\end{equation}
>From \eqref{eq:4p10}, \eqref{eq:4p11} and \eqref{eq:4p13} we see that:
\begin{multline} \label{eq:4p14}
f^{\star\star}\left(\frac{t}{\gamma}\right) \leq \\
2^n \gamma f_{Q_0} \exp\left[ 2^{n-1} \gamma \sum_{k=0}^{s-1} \frac{n}{\ln(\gamma)} \int_{2(\gamma^kt)^\frac{1}{n}}^{2(\gamma^{k+1}t)^\frac{1}{n}} v(f;\sigma)\,\frac{\mr d\sigma}{\sigma} + 2^{n-1} \gamma B_{\gamma^st} \right],
\end{multline}
>From \eqref{eq:4p14} we have as a consequence that
\begin{equation} \label{eq:4p15}
f^{\star\star}\left(\frac{t}{\gamma}\right) \leq
2^n \gamma f_{Q_0} \exp\left[ 2^{n-1} n \frac{\gamma}{\ln(\gamma)} \int_{2t^\frac{1}{n}}^1 v(f;\sigma)\,\frac{\mr d\sigma}{\sigma} + 2^{n-1} \gamma v\left(f; \frac{1}{2^n}\right) \right],
\end{equation}
and this holds for every $t\in \big(0, \frac{1}{2^n\gamma}\big]$ and any $\gamma > 1$.
We choose now in \eqref{eq:4p15} $\gamma=\mr e$ in order that the function $\gamma \longmapsto \frac{\gamma}{\ln(\gamma)}$, is minimized on $(1,+\infty)$. Then
\begin{multline} \label{eq:4p16}
\eqref{eq:4p15} \implies f^{\star\star}\left(\frac{t}{\mr e}\right) \leq \\
2^n \mr e f_{Q_0} \exp\left[ 2^{n-1} \mr e n \int_{2t^\frac{1}{n}}^1 v(f;\sigma)\,\frac{\mr d\sigma}{\sigma} \right] \exp\left[ 2^{n-1} \mr e v\left(f;\frac{1}{2^n}\right) \right],
\end{multline}
for every $t\in \big(0, \frac{1}{2^n\mr e}\big]$. Certainly $v\left(f;\frac{1}{2^n}\right)\leq 2$. Thus \eqref{eq:4p16} gives
\begin{equation} \label{eq:4p17}
f^{\star\star}\left(\frac{t}{\mr e}\right) \leq
C_1 f_{Q_0} \exp\left( C_2 \int_{C_3' t^\frac{1}{n}}^1 v(f;\sigma)\,\frac{\mr d\sigma}{\sigma}\right),
\end{equation}
for every $t\in\big(0, \frac{1}{2^n\gamma \mr e}\big]$, for certain constants $C_1, C_2, C_3'$.
By setting $y = \frac{t}{\mr e}$ in \eqref{eq:4p17}, we conclude that for every $y\in \big(0,\frac{1}{2^n\mr e^2}\big]$ the following inequality holds:
\begin{equation} \label{eq:4p18}
f^{\star\star}(y) \leq c_1 f_{Q_0} \exp\left( c_2 \int_{C_3'\mr e^\frac{1}{n}y^\frac{1}{n}} v(f;\sigma)\,\frac{\mr d\sigma}{\sigma} \right),
\end{equation}
where
\begin{align*}
c_1 &= 2^n \mr e \exp[2^n\mr e] = 2^n \exp[2^n \mr e + 1] \\
c_2 &= 2^{n-1} \mr e n \\
\text{and}\ \ c_3 &= C_3' \mr e^\frac{1}{n}.
\end{align*}
So by setting $c_4 = \frac{1}{2^n \mr e^2}$, we derive the proof of our Theorem.
\end{proof}

\medskip
\noindent We are now ready to give the
\begin{proof}[Proof of Theorem \ref{thm:5}]
We are given a function $f\in \mr{GR}_\mathcal{D}(Q_0,\varepsilon)$ for some $\varepsilon: 0< \varepsilon< \frac{1}{2^{n-1}}$ and suppose that $t\in(0,1]$ is fixed. By using Theorem \ref{thm:3} we obtain:
\begin{equation} \label{eq:4p19}
\frac{1}{t} \int_0^t |f^\star(u)-f^{\star\star}(t)|\,\mr du \leq
2^n \varepsilon f^{\star\star}(t).
\end{equation}
Then by Lemma \ref{lem:2p3} we have in view of \eqref{eq:4p13} that
\[
f^{\star\star}\left(\frac{t}{\gamma}\right) \leq \left( 2^{n-1} \gamma \varepsilon + 1\right) f^{\star\star}(t),\ \text{for any}\ \gamma > 1.
\]
Let now $p_0$ be the unique $p>1$ such that $\frac{p^p}{(p-1)^{p-1}} = \frac{1}{2^{n-1}\varepsilon}$. We set $\gamma = \left(\frac{p_0}{p_0-1}\right)^{p_0} \implies \gamma^\frac{1}{p_0} = \frac{p_0}{p_0-1}$.
Then $\left(2^{n-1} \gamma \varepsilon + 1\right)^{p_0} = \left(2^{n-1} \varepsilon \left(\frac{p_0}{p_0-1}\right)^{p_0} + 1\right)^{p_0} = \left(2^{n-1} \varepsilon \frac{1}{p_0-1} \frac{1}{2^{n-1}\varepsilon} + 1\right)^{p_0} = \left(1 + \frac{1}{p_0-1}\right)^{p_0} = \left(\frac{p_0}{p_0-1}\right)^{p_0} = \gamma \implies \left(2^{n-1} \gamma \varepsilon + 1\right) = \gamma^\frac{1}{p_0}$, for a certain $\gamma > 1$ given as above. \\
Thus
\begin{equation} \label{eq:4p20}
f^{\star\star}\left(\frac{t}{\gamma}\right) \leq \gamma^\frac{1}{p_0} f^{\star\star}(t),\quad \forall t\in(0,1].
\end{equation}
Let now $j\in\mb N$ be such that
\begin{equation} \label{eq:4p21}
\gamma^{-j}< t\leq \gamma^{-j+1},
\end{equation}
then by \eqref{eq:4p20} we see inductively that $f^{\star\star}(\gamma^{-k}) \leq \gamma^\frac{k}{p_0} f^{\star\star}(1)$, for any $k\in\mb N$, so by using \eqref{eq:4p21} for our $t$ we conclude that
\begin{equation} \label{eq:4p22}
f^{\star\star}(t) \leq f^{\star\star}(\gamma^{-j}) \leq \gamma^\frac{j}{p_0} f^{\star\star}(1).
\end{equation}
By \eqref{eq:4p21} now $\gamma^\frac{j}{p_0} \leq \left(\frac{\gamma}{t}\right)^\frac{1}{p_0}$. Thus from this last inequality and \eqref{eq:4p22} we have that:
\[
f^{\star\star}(t) \leq \frac{\gamma^\frac{1}{p_0}}{t^\frac{1}{p_0}} f^{\star\star}(1) = \left(\frac{p_0}{p_0-1}\right) f_{Q_0} t^{-\frac{1}{p_0}},
\]
and this holds for any $t\in (0,1]$. The proof of Theorem \ref{thm:5} is now complete.
\end{proof}

At last we mention that the proof of Corollary \ref{cor:1}, is immediate by the statement of Theorem \ref{thm:5}.

\vspace{50pt}
\noindent Eleftherios N. Nikolidakis \\
Post-doctoral researcher\\
National and Kapodistrian University of Athens\\
Department of Mathematics\\
Panepisimioupolis, Zografou 157-84, Athens, Greece\\
E-mail address: lefteris@math.uoc.gr

\end{document}